\newtheorem{Exam.}{Exam.}
\newtheorem{definition}{Definition}
\newtheorem{theorem}{Theorem}
\newtheorem{lem}[theorem]{Lemma}
\newtheorem{proposition}[theorem]{Proposition}
\newtheorem{conjecture}[theorem]{Conjecture}
\def\red{\textcolor{red}}
\def\L{\mathcal L}
\numberwithin{equation}{section}
\def\Hom{\mathrm{Hom}}
\def\Epi{\mathrm{Epi}}
\def\End{\mathrm{End}}
\def\N{\mathbb N}
\def\Z{\mathbb Z}
\def\Q{\mathbb Q}
\def\S{\mathfrak{S}}
\begin{document}

\title{On the number of congruence classes of paths}

\author{Zhicong Lin}
\address[Zhicong Lin]{Department of Mathematics and Statistics, Lanzhou University, China, and  Universit\'{e} de Lyon; Universit\'{e} Lyon 1; Institut Camille Jordan; UMR 5208 du CNRS; 43, boulevard du 11 novembre 1918, F-69622 Villeurbanne Cedex, France}
\email{lin@math.univ-lyon1.fr}

\author{Jiang Zeng}
\address[Jiang Zeng]{Universit\'{e} de Lyon; Universit\'{e} Lyon 1; Institut Camille Jordan; UMR 5208 du CNRS; 43, boulevard du 11 novembre 1918, F-69622 Villeurbanne Cedex, France}
\email{zeng@math.univ-lyon1.fr}

\date{\today}
\maketitle

\begin{abstract}
Let $P_n$  denote the undirected path of  length $n-1$. 
The cardinality of the set of congruence classes induced by the graph homomorphisms  
from $P_n$ onto $P_k$  is determined. This settles an open problem of 
 Michels and Knauer~(Disc. Math., 
309\ (2009)\  5352-5359). Our result is based on a new
proven formula of the number of homomorphisms between paths.
\vskip0.05in
Keywords:  Graph, graph endomorphisms, graph homomorphisms,  paths, lattice paths

\end{abstract}

\section{Introduction}

We use standard notations and terminology of graph theory in \cite{bm} or \cite[Appendix]{st}. 
The graphs considered here are finite and undirected without multiple edges and loops. Given a graph $G$, we write $V(G)$ for the vertex set and $E(G)$ for the edge set. 
A {\it homomorphism} from a graph $G$ to a graph $H$ is a mapping $f:V(G)\to V(H)$ 
such that the images of adjacent vertices are adjacent.
An \emph{endomorphism} of a graph is a homomorphism from the graph to itself. Denote by $\Hom(G, H)$ the set of homomorphisms from $G$ to $H$ and by $\End(G)$ the set of endomorphisms of a graph $G$. 
For any finite set $X$ we denote by $|X|$ the cardinality of $X$.
 A {\it path} with $n$ vertices is a graph whose vertices can be labeled $v_1,...,v_n$ so that $v_i$ and $v_j$ are adjacent if and only if $|i-j|=1$; let $P_n$ denote such a graph with $v_i=i$ for $1\le i\le n$. Every endomorphism $f$ on $G$ induces a partition $\rho$ of $V(G)$, 
 also called \emph{the congruence classes induced by $f$}, 
 with vertices in the same block if they have the same image. 
 
Let $\mathscr{C}(P_{n})$ denote the set of endomorphism-induced partitions of $V(P_{n})$,
 and let $|\rho|$ denote
  the number of blocks in a partition $\rho$.  
%  If $\rho$ is the induced partition of $f\in \End(P_{n})$,  then $|\rho|$  is also called the number of 
%  congruence classes of $f$, see \cite{mk}.
  For example, if $f \in \End(P_{4})$ is defined by $f(1)=3$, $f(2)=2$, $f(3)=1$, $f(4)=2$, then the induced partition 
  $\rho$ is $\{\{1\},\{2,4\},\{3\}\}$ and $|\rho|=3$.

The problem  of counting the homomorphisms from $G$ to $H$ is difficult in general. However, some algorithms and formulas for 
computing the number of homomorphisms of paths have been published recently (see \cite{ar, aw, mk}). In particular, Michels and 
Knauer~\cite{mk} give  an algorithm based on the \emph{epispectrum} $\Epi(P_{n})$ of a path $P_n$.  They define $\Epi(P_{n}) = (l_{1}(n), . . . , l_{n-1}(n))$, where
\begin{equation}\label{eq:b}
l_{k}(n) = | \{  \rho \in \mathscr{C}(P_{n}) \  : \  |\rho| = n - k + 1 \} |.
\end{equation} 
Here a misprint in the definition of $l_{k}(n) $ in  \cite{mk} is corrected. 

In \cite{mk}, based on the first values of $l_k(n)$, Michels and Knauer speculated  the following conjecture.
\begin{conjecture}
There exists a polynomial $f_{k} \in \Q[x]$ 
 with $deg(f_{k}) = \lceil (k - 2)/2 \rceil$ such that for a fixed $n_{k}$ (most probably $n_k=2k$) the equality 
 $l_{k}(n) = f_{k}(n)$ holds for $n \geq n_{k}$. 
 \end{conjecture}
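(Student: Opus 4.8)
The plan is to convert everything into the enumeration of $\pm1$ lattice paths and then to prove the conjecture by a structural analysis of \emph{near-extremal} paths. First I would identify a homomorphism $f\in\Hom(P_n,P_m)$ with a step word: since adjacent vertices have adjacent images, $|f(i)-f(i+1)|=1$, so $(f(1),\dots,f(n))$ is a $\pm1$ walk, equivalently a word $w\in\{+,-\}^{n-1}$. The congruence class induced by $f$ records which positions carry equal values, so its number of blocks equals the number of distinct values visited; because a $\pm1$ walk meets every integer between its extremes, this number is $1+\mathrm{range}(w)$. Thus a class with $|\rho|=n-k+1$ corresponds to homomorphisms onto a path with $n-k+1$ vertices. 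Next I claim that, for $m\ge2$, each such class is induced by \emph{exactly two} surjective homomorphisms $P_n\twoheadrightarrow P_m$: if two surjections have the same fibres they differ by a bijection $\sigma$ of $V(P_m)$, and since a surjective $\pm1$ walk traverses every edge of $P_m$, the map $\sigma$ must preserve edges, i.e.\ $\sigma\in\mathrm{Aut}(P_m)$, and $|\mathrm{Aut}(P_m)|=2$. Writing $S(n,m)$ for the number of step words of length $n-1$ whose partial sums have range exactly $m-1$ (equivalently, surjective homomorphisms onto $P_m$), this yields the key identity $l_k(n)=\tfrac12\,S(n,n-k+1)$.

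Next I would analyse the structure of near-extremal paths. Put $e=k-1$ and $N=n-1$, so that $S(n,n-k+1)$ counts step words of length $N$ whose range is $N-e$. As $N$ is the maximal possible range, such words are almost monotone. I would decompose each word around the positions of its global minimum and maximum and prove a defect identity of the form $N-\mathrm{range}(w)=(\text{steps lying outside the min--max interval})+2\,(\text{reversing steps inside it})$. With $N-\mathrm{range}(w)=e$ fixed, all three quantities are $O(e)$; hence every counted word is one long monotone climb (or descent) decorated with $O(1)$ \emph{bounded} local features, namely a short wiggle near each end and a bounded number $d$ of interior backtracks.

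I would then sort the words by \emph{type}, namely the combinatorial data of the two boundary wiggles together with the number $d$ of interior backtracks; the number of types is finite and independent of $n$. For a fixed type the only unbounded degree of freedom is the choice of where along the climb to insert the $d$ interior backtracks, subject to the path staying inside its strip. Once $n$ is large enough that the two ends cannot interfere, the number of admissible insertions is a binomial-coefficient polynomial in $N$ of degree $d$; summing over the finitely many types shows that $S(n,n-k+1)$, and hence $l_k(n)$, agrees with a polynomial in $n$ for all $n\ge n_k$. The largest $d$ allowed by $s_1+s_3+2d=e$ (with $s_1,s_3\ge0$ the two boundary budgets) is $d=\lfloor e/2\rfloor=\lceil(k-2)/2\rceil$, realised by the monotone words carrying $\lfloor e/2\rfloor$ freely placed backtracks and no boundary wiggle; these yield a nonzero leading coefficient, so $\deg f_k=\lceil(k-2)/2\rceil$ exactly.

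The delicate point, which I expect to be the main obstacle, is the exactness in the previous step: the in-strip constraint deletes some insertions, and one must show that this correction is itself polynomial, of the same or smaller degree and with no surviving periodicity, and then pin down the threshold $n_k$ (conjecturally $2k$) beyond which the count stabilises into a single polynomial. Here the new closed formula for $|\Hom(P_n,P_m)|$ is decisive: via the inclusion--exclusion relation $S(n,m)=|\Hom(P_n,P_m)|-2\,|\Hom(P_n,P_{m-1})|+|\Hom(P_n,P_{m-2})|$, evaluating along the diagonal $m=n-k+1$ turns the structural count into an explicit expression whose polynomiality, degree, and range of validity can be read off directly, thereby confirming both the degree $\lceil(k-2)/2\rceil$ and the value of $n_k$.
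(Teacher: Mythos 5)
Your reduction steps are sound and essentially reproduce the paper's two lemmas: your automorphism argument (same fibres $\Rightarrow$ the two maps differ by an element of $\mathrm{Aut}(P_m)$, which has order $2$) is a clean alternative proof of the paper's Lemma~\ref{lem9}, $l_k(n)=|\Epi(P_n,P_{n-k+1})|/2$, and your identity $S(n,m)=|\Hom(P_n,P_m)|-2|\Hom(P_n,P_{m-1})|+|\Hom(P_n,P_{m-2})|$ is exactly the paper's Lemma~\ref{lem7}. The problem is everything after that. Your structural analysis of near-extremal words is only a sketch: the ``defect identity'' is stated as an unproven shape, the claim that each type contributes a binomial polynomial in $N$ is asserted rather than established, and you yourself concede that the in-strip corrections and the threshold $n_k$ are ``the main obstacle.'' Your proposed fix for that obstacle is to fall back on the explicit formula of Theorem~\ref{th:1} and claim that polynomiality, the degree, and the validity range $n\geq 2k$ ``can be read off directly.'' They cannot. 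After substituting Theorem~\ref{th:1} into the inclusion--exclusion, what you have is a double sum $\sum_{i=0}^{n-2}2^{n-i-2}\sum_{j\in\Z}(-A_{i,j}+2B_{i,j}-C_{i,j})$ of signed binomial coefficients multiplied by exponentially large factors $2^{n-i-2}$; that this collapses to the fixed polynomial $\binom{n-1}{\lceil k/2\rceil-1}+\binom{n-1}{\lfloor k/2\rfloor-1}$ is the actual content of the theorem and requires real work.

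Concretely, what is missing is the paper's Lemma~\ref{lem8} together with the telescoping step: one must show that for $n\geq 2k$ all terms with $j<0$ vanish, all ``positive'' terms with $j>0$ vanish, and all ``negative'' terms with $j\geq 2$ vanish, so that the inner sum reduces to the six terms with $j\in\{0,1\}$; then combine $A^-$ with $C^-$ via Pascal's rule $\binom{n}{k}+\binom{n}{k-1}=\binom{n+1}{k}$ to recognize the summand as a difference $D_{i+1}-D_i$, whence $l_k(n)=\sum_{i=0}^{n-2}(D_{i+1}-D_i)=D_{n-1}$. This vanishing analysis is also precisely where the hypothesis $n\geq 2k$ enters, i.e.\ where the conjectured threshold $n_k=2k$ is actually pinned down; without it you have neither the polynomial nor the bound on $n_k$. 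So the proposal, as written, proves the easy reductions, leaves the genuinely new route (near-extremal path combinatorics) incomplete at its own acknowledged sticking point, and replaces the hard computation it defers to with an assertion. Either carry out the type-by-type count rigorously (including the boundary-interference corrections and a proof that they are eventually polynomial with no periodicity), or carry out the binomial-sum collapse; at present you have done neither.
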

 
The aim  of this paper is to confirm this conjecture by giving
an explicit formula for the polynomial $f_k$. 
For this purpose, we shall  prove a  new formula 
for the number of homomorphisms from 
$P_n$ to $P_k$, which is the content of the following theorem.

 \begin{theorem} \label{th:1}
For any positive integers $n$ and $k$,
\begin{align} 
|\Hom(P_{n}, P_{k})|&=k\times2^{n-1}\ \label{eq:theorem2}\\
&-\sum\limits_{i=0}^{n-2}
2^{n-1-i}\sum_{j\in \mathbb{Z}}\left({i \choose \lceil\frac{i}{2}\rceil-j(k+1)}-{i \choose
\lfloor\frac{i+k+1}{2}\rfloor-j(k+1)}\right).\nonumber
\end{align}
\end{theorem}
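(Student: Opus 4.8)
The plan is to count homomorphisms $f\colon P_n\to P_k$ by encoding each one as a lattice path. Since $P_n$ has vertices $1,\dots,n$ with $i\sim i+1$, a homomorphism is exactly a sequence $f(1),f(2),\dots,f(n)$ of vertices in $\{1,\dots,k\}$ such that consecutive values differ by exactly $1$, i.e.\ $f(i+1)-f(i)\in\{+1,-1\}$. Thus $|\Hom(P_n,P_k)|$ equals the number of walks of length $n-1$ with $\pm1$ steps that start anywhere in $\{1,\dots,k\}$ and stay within the strip $1\le f(i)\le k$ for all $i$. The first term $k\cdot 2^{n-1}$ in \eqref{eq:theorem2} is precisely the count of all $\pm1$ walks of length $n-1$ with a free starting point in $\{1,\dots,k\}$ and no confinement; the subtracted sum must therefore account for exactly those walks that exit the strip, so the heart of the proof is an inclusion--exclusion that corrects for the two walls.

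First I would set up the walk model carefully and invoke the classical reflection principle for a strip of width $k$ (equivalently, Lindström--Gessel--Viennot / the method of images), which counts constrained $\pm1$ paths by an alternating sum over reflected images of the endpoint across the two barriers at $0$ and $k+1$. The key combinatorial identity is that the number of length-$m$ $\pm1$ walks from height $a$ confined to $1\le \cdot\le k$ equals $\sum_{j\in\mathbb Z}\bigl(N_m(a,b+2j(k+1))-N_m(a,2(k+1)-b+2j(k+1))\bigr)$ summed appropriately, where $N_m(a,b)=\binom{m}{(m+b-a)/2}$ is the unconstrained count. Rather than computing the confined count directly, though, I would compute its complement: the number of walks that \emph{violate} the strip condition for the first time after exactly $i$ further free steps. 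This is what produces the telescoping structure $\sum_{i=0}^{n-2}2^{n-1-i}(\cdots)$, where the factor $2^{n-1-i}$ counts the free continuations after a first forbidden excursion of length related to $i$.

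The cleanest route is probably to define $h(n,k)=|\Hom(P_n,P_k)|$ and establish a recurrence in $n$, then verify the closed form by summation. Observe that extending a homomorphism on $P_{n}$ to one on $P_{n+1}$ doubles the count except when $f(n)\in\{1,k\}$, where only one extension is available. This gives
\begin{equation}\label{eq:recur}
h(n+1,k)=2\,h(n,k)-\bigl(B_n(1)+B_n(k)\bigr),
\end{equation}
where $B_n(1)$ and $B_n(k)$ count confined walks of length $n-1$ ending at the boundary vertices $1$ and $k$ respectively. By the left--right symmetry of $P_k$, $B_n(1)=B_n(k)$, and each boundary-ending count is itself expressible via the reflection principle as an alternating sum of binomial coefficients of the form $\binom{i}{\lceil i/2\rceil-j(k+1)}-\binom{i}{\lfloor (i+k+1)/2\rfloor-j(k+1)}$. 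Unwinding \eqref{eq:recur} from $h(1,k)=k$ then yields $h(n,k)=k\cdot 2^{n-1}-\sum_{i=0}^{n-2}2^{n-1-i}\cdot 2B_{i+1}(1)$, matching \eqref{eq:theorem2} once the boundary count $B_{i+1}(1)$ is identified with the displayed inner sum.

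The main obstacle I anticipate is the precise bookkeeping of the reflection principle inside a strip of finite width $k$: a single reflection across one wall suffices for a half-line, but confinement between two walls forces the full infinite family of images indexed by $j\in\mathbb Z$, and one must check that the floor/ceiling offsets $\lceil i/2\rceil$ versus $\lfloor (i+k+1)/2\rfloor$ correctly track the parity of $i$ and the placement of the reflected endpoints modulo $k+1$. Getting the period $k+1$ (not $k$ or $2k$) and the exact arguments of the binomial coefficients right, uniformly in the parity of $i$, is where careful case analysis will be needed; the rest of the argument is a routine extraction from the recurrence \eqref{eq:recur}.
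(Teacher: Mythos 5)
Your outline follows essentially the same route as the paper: the extension recurrence $h(n+1,k)=2h(n,k)-\bigl(B_n(1)+B_n(k)\bigr)$ is exactly the paper's reduction (its $|\Hom(P_{i+1},P_{k})|=2|\Hom(P_{i},P_{k})|-2|\Hom^1(P_{i},P_{k})|$, since by reversing the path $B_n(1)=B_n(k)=|\Hom^1(P_n,P_k)|$), and evaluating the boundary counts by the reflection principle in a strip of period $k+1$ is precisely what the paper does via the lattice-path encoding \eqref{key1}, the two-barrier reflection formula \eqref{le:Wa}, and the resulting expression \eqref{eq4} for $|\Hom^1(P_n,P_k)|$.

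However, your final unwinding is off by a factor of $2$, so the formula you display is not the theorem. Iterating $h(m+1,k)=2h(m,k)-2B_m(1)$ down from $h(1,k)=k$, the term $2B_m(1)$ acquires the coefficient $2^{(n-1)-m}$, giving
\begin{equation*}
h(n,k)=k\cdot 2^{n-1}-\sum_{m=1}^{n-1}2^{\,n-1-m}\cdot 2B_m(1)
=k\cdot 2^{n-1}-\sum_{i=0}^{n-2}2^{\,n-1-i}B_{i+1}(1),
\end{equation*}
not $k\cdot 2^{n-1}-\sum_{i=0}^{n-2}2^{\,n-1-i}\cdot 2B_{i+1}(1)$ as you wrote. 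Since $B_{i+1}(1)$ (confined walks of length $i$ ending at $1$) is exactly the inner sum $\sum_{j\in\Z}\bigl(\binom{i}{\lceil i/2\rceil-j(k+1)}-\binom{i}{\lfloor (i+k+1)/2\rfloor-j(k+1)}\bigr)$ --- this identification is the paper's Lemma~\ref{lem4} --- your version subtracts twice the correct quantity. A small sanity check exposes it: for $n=k=2$ the theorem gives $4-2=2=|\Hom(P_2,P_2)|$, while your formula gives $4-4=0$. This is an arithmetic slip in the bookkeeping rather than a flaw in the method: once the exponent is corrected, your identification of $B_{i+1}(1)$ with the inner sum completes the proof exactly as in the paper. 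The reflection-principle details you deferred (the period $k+1$ and the offsets $\lceil i/2\rceil$ versus $\lfloor (i+k+1)/2\rfloor$) do require the care you anticipate, and they are carried out in \eqref{le:Wa} and Lemma~\ref{lem4}; your sketch of them is consistent with that treatment.
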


From the above theorem we are able to derive 
the following  main result. 

\begin{theorem} \label{th:2}
If $n\geq2k$, then 
\begin{align}\label{eq:main}
l_{k}(n)=\binom{n-1}{\left\lceil\frac{k}2\right\rceil-1} + \binom{n-1}{\lfloor\frac{k}2\rfloor-1}.
\end{align}
Equivalently, the above formula can be rephrased as follows 
\begin{align}
l_{2k}(n)=2{n-1 \choose k-1}, \qquad
l_{2k+1}(n)={n \choose k}.
\end{align}
\end{theorem}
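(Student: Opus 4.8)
The plan is to pass from congruence classes to homomorphism counts and then invoke Theorem~\ref{th:1}. First I would record the structural fact that a partition $\rho \in \mathscr{C}(P_n)$ with $|\rho| = r$ blocks is exactly the fibre partition of a \emph{vertex-surjective} homomorphism from $P_n$ onto $P_r$: indeed the image of any homomorphism out of the connected graph $P_n$ is a subpath, and composing the surjection onto that subpath (isomorphic to $P_r$) with its embedding into $P_n$ realises $\rho$ as an endomorphism-induced partition. Two surjective homomorphisms onto $P_r$ share the same fibre partition if and only if they differ by an automorphism of $P_r$, and $\abs{\mathrm{Aut}(P_r)} = 2$ for $r \ge 2$ (the identity and the reflection). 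Since $n \ge 2k$ forces $r = n-k+1 \ge k+1 \ge 2$, this gives
\[
l_k(n) = \tfrac12\,\abs{\Epi(P_n, P_{n-k+1})},
\]
where $\Epi$ denotes the set of vertex-surjective homomorphisms.

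Next I would express $\abs{\Epi(P_n,P_r)}$ through the total counts $H(n,m) := \abs{\Hom(P_n,P_m)}$. A homomorphism $P_n \to P_r$ is surjective precisely when, viewed as a walk of length $n-1$ on $P_r$, it meets both endpoints $1$ and $r$. Inclusion--exclusion over the two forbidden endpoints then yields
\[
\abs{\Epi(P_n,P_r)} = H(n,r) - 2H(n,r-1) + H(n,r-2),
\]
the second difference of $H$ in its second argument (walks avoiding an endpoint live in a subpath $P_{r-1}$, and walks avoiding both live in $P_{r-2}$). Setting $r = n-k+1$ reduces Theorem~\ref{th:2} to evaluating this second difference, for which Theorem~\ref{th:1} supplies each of the three terms.

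The heart of the argument is the simplification of the resulting expression under the hypothesis $n \ge 2k$. Substituting the formula of Theorem~\ref{th:1}, the term $m\cdot 2^{n-1}$ is linear in $m$ and the sub-sum $\sum_i 2^{n-1-i}\binom{i}{\lceil i/2\rceil}$ is independent of $m$, so both are annihilated by the second difference; only the contribution of $\binom{i}{\lfloor (i+m+1)/2\rfloor}$ survives. I would then argue that $n \ge 2k$ makes the modulus $m+1 \in \{n-k, n-k+1, n-k+2\}$ large compared with the upper summation bound $i \le n-2$, so that every term with $j \ne 0$ in the inner sum vanishes (the relevant binomial upper index falls outside $\{0,\dots,i\}$); this is where the bound $n \ge 2k$ is used sharply. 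What remains is a finite, explicitly indexed sum of binomial coefficients which telescopes, upon taking the second difference in $m$, to $2\bigl(\binom{n-1}{\lceil k/2\rceil - 1} + \binom{n-1}{\lfloor k/2\rfloor - 1}\bigr)$; dividing by $2$ gives \eqref{eq:main}, and splitting according to the parity of $k$ gives the equivalent form.

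The main obstacle I anticipate is this last collapse: carefully tracking the floor and ceiling functions as $m$ runs over $\{n-k-1,n-k,n-k+1\}$, confirming the vanishing of the $j\neq 0$ terms exactly in the range $n\ge 2k$ (so that the threshold $n_k = 2k$ is the sharp one predicted in the conjecture), and verifying that the surviving second difference consolidates into precisely two binomial coefficients rather than a longer alternating sum. Equivalently, one can bypass part of this bookkeeping by counting $\Epi(P_n,P_{n-k+1})$ directly as the number of $\pm1$ lattice walks of length $n-1$ with range exactly $n-k$: when $n \ge 2k$ the range exceeds half the length, the images in the reflection principle do not interfere, and the ballot-type count reduces to the same two binomials, giving an independent check on the computation.
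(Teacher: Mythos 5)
Your first two reductions are correct and coincide with the paper's own: the identity $l_k(n)=\tfrac12|\Epi(P_n,P_{n-k+1})|$ is the paper's Lemma~\ref{lem9}, and the second-difference formula $|\Epi(P_n,P_r)|=|\Hom(P_n,P_r)|-2|\Hom(P_n,P_{r-1})|+|\Hom(P_n,P_{r-2})|$ is its Lemma~\ref{lem7}. Your justification of the factor $2$ via $\mathrm{Aut}(P_r)$ (which implicitly uses that a vertex-surjective homomorphism of paths is automatically edge-surjective, so two surjections with equal fibres differ by an automorphism) is cleaner than the paper's explicit argument, which arranges the blocks one by one and shows there are exactly two admissible linear orders; either route is fine here.

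The genuine gap is in the final collapse, exactly where you anticipated trouble: the claim that for $n\ge 2k$ ``every term with $j\neq0$ in the inner sum vanishes'' is false, and with it your computation cannot produce the answer you state. What is actually true (paper's Lemma~\ref{lem8}) is finer: all terms with $j<0$ vanish; for $j>0$ only the first binomial family $\binom{i}{\lceil i/2\rceil-j(m+1)}$ vanishes; and the second family $\binom{i}{\lfloor (i+m+1)/2\rfloor-j(m+1)}$ vanishes only for $j\ge2$. The $j=1$ terms of the second family survive: for instance with modulus $m+1=n-k+2$, even $k\ge4$ and $i=n-2$, the term equals $\binom{n-2}{k/2-2}\neq0$; with modulus $n-k$ the $j=1$ terms are nonzero for all $n-k\le i\le n-2$, a nonempty range whenever $k\ge2$. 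These survivors are not a technicality: after Pascal's rule and the telescoping they contribute precisely the second binomial $\binom{n-1}{\lfloor k/2\rfloor-1}$ of \eqref{eq:main}. If your vanishing claim were correct, the $j=0$ terms alone would telescope to the single binomial $\binom{n-1}{\lceil k/2\rceil-1}$ --- e.g.\ $l_4(n)$ would come out as $n-1$ instead of $2(n-1)$ --- so the two-binomial answer you assert at the end is inconsistent with your own simplification, and the inconsistency should have been the warning sign. The same overreach affects the lattice-walk variant you sketch: when the strip has width $n-k\ge k$, it is only the \emph{multiply}-reflected images (two or more reflections) that drop out; the singly-reflected image off each barrier still contributes, and those single reflections are exactly what carry the second binomial coefficient.
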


When $n\geq 2k$, Theorem~3 shows immediately that $l_{k}(n)$
 is a polynomial in $n$ of degree $ \lceil (k - 2)/2 \rceil$. This proves Conjecture~1.
In particular, we have $l_1(n)=1,
l_2(n)=2, l_3(n)=n, l_4(n)=2(n-1),
l_5(n)=\frac{1}{2}n(n-1)$ and $l_6(n)=(n-1)(n-2)$, which coincide with the conjectured values  in \cite{mk} after shifting the index by 1.

In the next section, we  first recall  some basic counting 
 results about  the  lattice paths
and then prove Theorem~2. In Section~3 we give the proof of Theorem~3.
%%%%%%%%%%%%%%%%%
\section{The number of homomorphisms between  paths}
%%%%%%%%%%%%%%%%%
One can enumerate homomorphisms  from $P_n$ to $P_k$ by picking a fixed point as image of 
$1$ and moving to vertices which are adjacent to this vertex, as 
$$f\in \Hom(P_n,P_k)\Leftrightarrow 
\forall x\in \{1, \ldots, n-1\}: \{f(x),  f(x+1)\}\in E(P_k).
$$
 Hence, one  can describe all possible moves through the edge structure of the two  paths. 

For  $1\leq j \leq k$, let
\begin{align}
\Hom^j(P_{n},P_{k})=\{f \in \Hom(P_{n},P_{k}) \ : \ f(1)=j\}.
\end{align}
Obviously, we have
\begin{align}
|\Hom^j(P_{n},P_{k})|=|\{f \in \Hom(P_{n},P_{k}) \ :\ f(n)=j\}|.
\end{align}

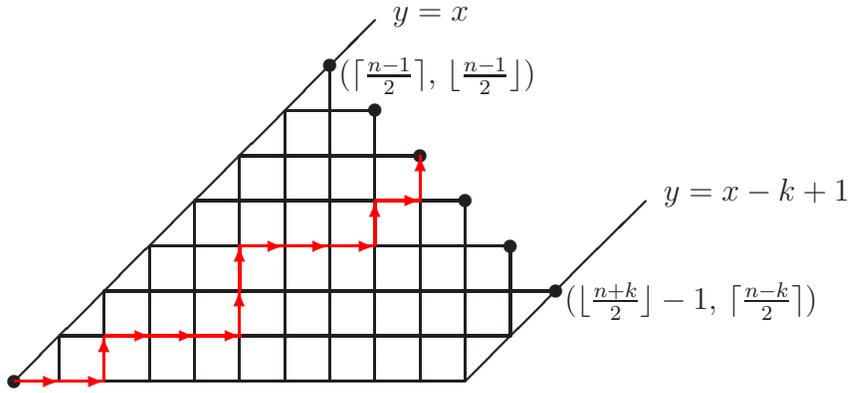
\begin {figure}
\setlength {\unitlength} {1mm}
\begin {picture} (100,55) \setlength {\unitlength} {1.2mm}
\thicklines
\put(10,0){\line(1,0){40}} \put(0,0){\line(1,1){40}} \put(42,40){$y=x$} \put(50,0){\line(1,1){20}}\put(72,20){$y=x-k+1$}
\put(0,0){\circle*{1.5}}  
\put(5,0){\line(0,1){5}} \put(5,5){\line(1,0){5}} \put(25,5){\line(1,0){30}}
\put(10,5){\line(0,1){5}} \put(10,10){\line(1,0){50}}
\put(15,0){\line(0,1){15}} \put(15,15){\line(1,0){10}} \put(40,15){\line(1,0){15}}
\put(20,0){\line(0,1){20}} \put(20,20){\line(1,0){20}}\put(45,20){\line(1,0){5}}
\put(25,0){\line(0,1){5}} \put(25,15){\line(0,1){10}}\put(25,25){\line(1,0){20}}
\put(30,0){\line(0,1){30}} \put(35,0){\line(0,1){35}} \put(40,0){\line(0,1){15}}\put(40,20){\line(0,1){10}}
\put(45,0){\line(0,1){20}} \put(50,0){\line(0,1){20}}
\put(55,5){\line(0,1){10}}\put(30,30){\line(1,0){10}}
\put(35,35){\circle*{1.5}}\put(40,30){\circle*{1.5}}
\put(45,25){\circle*{1.5}}\put(50,20){\circle*{1.5}}
\put(55,15){\circle*{1.5}}\put(60,10){\circle*{1.5}}
\put(36,33){($\lceil\frac{n-1}{2}\rceil$, $\lfloor\frac{n-1}{2}\rfloor$)}
\put(61,8){($\lfloor\frac{n+k}{2}\rfloor-1$, $\lceil\frac{n-k}{2}\rceil$)}
\red{\put(0,0){\vector(1,0){5}}} \red{\put(3.85,0){\vector(1,0){5}}}
\red{\put(7.7,0){\vector(0,1){5}}} \red{\put(6.7,5){\vector(1,0){5}}} \red{\put(10.5,5){\vector(1,0){5}}}
\red{\put(14.4,5){\vector(1,0){5}}}
\red{\put(18.1,5){\vector(0,1){5}}}
\red{\put(17,10){\vector(0,1){5}}}
\red{\put(15.7,15){\vector(1,0){5}}}
\red{\put(19.5,15){\vector(1,0){5}}}
\red{\put(23.5,15){\vector(1,0){5}}}
\red{\put(27.4,15){\vector(0,1){5}}}
\red{\put(26.3,20){\vector(1,0){5}}}
\red{\put(30.1,20){\vector(0,1){5}}}
\end{picture}
\caption{A lattice path from $(0,0)$ to $(9,5)$ that  stays between lines $y=x$ and $y=x-k+1$, where $n=15$ and $k=11$.}
\label{fig1}
\end {figure}

\begin{definition}
A \emph{lattice path}  of length $n$ is a sequence $(\gamma_0, \ldots, \gamma_{n})$ of points $\gamma_i$ in the plan $\Z\times\Z$ for all $0\leq i\leq n$ and such that $\gamma_{i+1}-\gamma_i=(1,0)$ (east-step) or $(0,1)$ (north-step) for $1\leq i\leq n-1$. 
\end{definition}

As shown by Arworn\cite{ar},  we can  
encode each  homomorphism $f\in \Hom^1(P_{n}, P_{k})$ by a lattice path 
$\gamma=(\gamma_0, \ldots, \gamma_{n-1})$ in $\N\times \N$
between the lines $y = x$ and $y = x - k+1$  as follows: 
\begin{itemize}
\item $\gamma_0=(0,0)$, and for $j=1,\ldots, n-1$,   
\item $\gamma_{j+1}=\gamma_j+(1,0)$ if 
$f(j)>f(j-1)$,  
\item $\gamma_{j+1}=\gamma_j+(0,1)$ if $f(j)<f(j-1)$. 
\end{itemize}
For example,  
if  the images of successive vertices of $f\in \Hom(P_{15}, P_{11})$ are 
$$
1,\, 2,\, 3,\, 2,\, 3,\, 4,\, 5,\, 4,\, 3,\, 4,\, 5,\, 6,\, 5,\, 6,\, 5;
$$ 
then   the  corresponding lattice path is  given by  Figure~\ref{fig1}. 

\begin{definition}
For nonnegative integers $n,m,t,s$, Let $\L(n,m)$ be the set of all the lattice paths from the origin to $(n,m)$ and 
$\L(n,m;t,s)$ the set  of lattice paths in  $\L(n,m)$  that
stay between the lines $y = x + t$ and $y = x-s$ (being allowed to
touch them), where  $n + t \geq m \geq n - s$.
\end{definition}

\begin{lem} Let $K=\min(\lfloor\frac{n+k}{2}\rfloor, n)$, then
\begin{align}\label{key1}
|\Hom^1(P_{n},P_{k})|
=\sum\limits_{l=\lceil\frac{n-1}{2}\rceil}^{K-1}|\L(l, n-1-l; 0, k-1)|.
\end{align} 
\end{lem}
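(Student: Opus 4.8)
The plan is to realize the right-hand side of $\eqref{key1}$ as a classification, according to their endpoint, of the lattice paths that encode the homomorphisms in $\Hom^1(P_n,P_k)$. First I would make the encoding recalled above precise and check that it is a bijection. Since every edge of $P_k$ joins two consecutive integers, a homomorphism $f$ with $f(1)=1$ is completely determined by its $n-1$ successive increments $f(j{+}1)-f(j)\in\{+1,-1\}$, and conversely any $\pm 1$ sequence starting from $1$ and remaining inside $\{1,\dots,k\}$ defines a member of $\Hom^1(P_n,P_k)$. Reading an increment $+1$ as an east-step and an increment $-1$ as a north-step turns $f$ into a lattice path $\gamma$ of length $n-1$ issued from $(0,0)$, and this map is clearly a bijection onto the set of length-$(n-1)$ paths subject to the boundary constraints made explicit next.

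Next I would translate the admissibility condition $1\le f(j)\le k$ into a geometric strip condition. After a prefix with $a$ east-steps and $b$ north-steps the value of $f$ equals $1+a-b$, so at the lattice point $(x,y)=(a,b)$ one has $f=1+x-y$. Hence $f\ge 1$ is equivalent to $y\le x$, and $f\le k$ is equivalent to $y\ge x-(k-1)$. Therefore $f$ is a valid homomorphism precisely when $\gamma$ stays between the lines $y=x$ and $y=x-k+1$, which is exactly the defining condition of the families $\L(\cdot,\cdot;0,k-1)$.

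Finally I would sort the admissible paths by their endpoint. A length-$(n-1)$ path with $l$ east-steps ends at $(l,n-1-l)$, so $\Hom^1(P_n,P_k)$ is the disjoint union, over the admissible values of $l$, of the sets $\L(l,n-1-l;0,k-1)$, and $\eqref{key1}$ follows by summing cardinalities once the correct range of $l$ is identified. The endpoint itself must lie in the strip: $y\le x$ at $(l,n-1-l)$ forces $n-1-l\le l$, i.e. $l\ge\lceil\frac{n-1}{2}\rceil$, while $y\ge x-(k-1)$ forces $n-1-l\ge l-(k-1)$, i.e. $l\le\lfloor\frac{n+k}{2}\rfloor-1$; combined with the obvious bound $l\le n-1$ coming from the path length, this gives $l\le K-1$ with $K=\min(\lfloor\frac{n+k}{2}\rfloor,n)$.

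I expect the main obstacle to be the bookkeeping of this endpoint range rather than the bijection, which is essentially forced. In particular the upper limit must correctly interpolate between the two regimes $k<n$, where the lower line $y=x-k+1$ is binding and $l\le\lfloor\frac{n+k}{2}\rfloor-1$, and $k\ge n$, where the path may consist entirely of east-steps and $l\le n-1$ is binding; the $\min$ defining $K$ is precisely what reconciles these. One should also confirm that every $l$ in the asserted range is genuinely attainable, so that no nonempty family is omitted, although any empty term would in any case contribute nothing to the sum.
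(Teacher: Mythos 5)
Your proposal is correct and follows essentially the same route as the paper: encode each $f\in\Hom^1(P_n,P_k)$ by its sequence of $\pm 1$ increments as a lattice path confined to the strip between $y=x$ and $y=x-k+1$, then classify paths by their number $l$ of east-steps, with the endpoint constraints yielding exactly the range $\lceil\frac{n-1}{2}\rceil\le l\le K-1$. Your write-up simply fills in details (the bijectivity of the encoding and the derivation of the strip inequalities from $1\le f(j)\le k$) that the paper's shorter proof leaves implicit.
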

\begin{proof}
 It follows from the above correspondence that each homomorphism from $P_n$ to $P_k$  is encoded by a lattice path in some $\L(\#E, \#N; 0, k-1)$, where
 $\#E$ is the number of east-steps and $\#N$ the number of north-steps.  The path structures require that 
 $$
 \#E+\#N=n-1,\quad \#E-\#N\leq k-1,\quad \#E-\#N\geq   0.
 $$
 Therefore, we must have $\#E\geq (n-1)/2$,  $\#E\leq n-1$ and $\#E\leq (k+n-2)/2$.
\end{proof}

To evaluate the sum in \eqref{key1}, we need a formula for the cardinality of $\L(n,m;t,s)$.
First of all,  each  lattice path in $\L(n,m)$ can be encoded by a word  of length $n+m$ on the alphabet $\{A, B\}$ with 
$n$ letters $A$  and $m$ letters $B$. So,
the cardinality  of $\L(n,m)$ is given by the binomial coefficient ${n+m \choose n}$. 
Next, each lattice path  in $\L(n,m)$ which passes above the line $y=x+t$ (or reaching the line $y=x+t+1$) can be 
mapped to a lattice path from $(-t-1, t+1)$ to $(n,m)$ by the \emph{reflection} with respect to the line $y=x+t+1$ (see Figure~\ref{fig2}). Hence, there are 
${n+m \choose n+t+1}$ such lattice paths.  Therefore, 
the number of  lattice paths in $\L(n,m)$ which do not pass above the line $y=x+t$ (or not reaching the line $y=x+t+1$), where $m\leq n+t$, is given by 
$$
{n+m \choose n}-{n+m \choose n+t+1}.
$$
By a similar reasoning, 
we can prove 
the following known result (see \cite[Lemma 4A]{na}, for example). For the reader's convenience, we  provide a sketch of the proof.
\begin{lem}   The cardinality of $\L(n,m;t,s)$ is given by
\begin{equation}\label{le:Wa}
|\L(n,m;t,s)|=\sum_{k \in \mathbb{Z}}\left({n+m \choose
n-k(t+s+2)}-{n+m \choose n-k(t+s+2)+t+1}\right),
\end{equation} where ${n \choose k}=0$ if $k>n$ or $k<0$.
\end{lem}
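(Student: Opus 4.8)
The plan is to establish \eqref{le:Wa} by iterating the single-wall reflection principle across the two boundary lines, thereby reducing the confined count to a signed sum over the orbit of the endpoint under the infinite dihedral group generated by the two reflections. Write $d=t+s+2$, and let $U$ and $L$ denote the forbidden lines $y=x+t+1$ and $y=x-s-1$. Since $n+t\geq m\geq n-s$ with $s,t\geq 0$, both $(0,0)$ and $(n,m)$ lie strictly inside the strip, and a path in $\L(n,m)$ belongs to $\L(n,m;t,s)$ precisely when it touches neither $U$ nor $L$. The first thing I would record are the two reflections in coordinates: reflection $R_U$ across $U$ sends $(a,b)\mapsto(b-t-1,\,a+t+1)$ and reflection $R_L$ across $L$ sends $(a,b)\mapsto(b+s+1,\,a-s-1)$. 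Their composition $T=R_LR_U$ is the translation $(a,b)\mapsto(a+d,\,b-d)$, so the group $G=\langle R_U,R_L\rangle$ is the infinite dihedral group whose orientation-preserving (even) elements are the powers $T^k$ and whose orientation-reversing (odd) elements are the maps $T^kR_U$, $k\in\Z$; note in particular that $R_L=TR_U$, so each odd element is listed exactly once.

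The core claim I would then prove is the strip reflection principle
\[
|\L(n,m;t,s)|\;=\;\sum_{g\in G}\operatorname{sgn}(g)\,N\bigl((0,0),\,g(n,m)\bigr),
\]
where $N(P,Q)$ counts unrestricted lattice paths from $P$ to $Q$ and $\operatorname{sgn}(g)=\pm1$ according to the parity of $g$. This is exactly the two-sided analogue of the single-wall computation already carried out above \eqref{le:Wa}, and I would derive it by applying that computation repeatedly: subtracting the paths that meet $U$ (a reflection of the endpoint across $U$, with a minus sign), then correcting for those among the reflected family that additionally meet $L$ (a further reflection, restoring a plus sign), and so on, alternately reflecting across $U$ and $L$. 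Each reflection flips the sign and pushes the target endpoint farther from the diagonal band, so for fixed $n,m$ only finitely many terms survive; the convention $\binom{n}{k}=0$ outside $0\le k\le n$ makes this automatic. Equivalently, the identity follows from a sign-reversing involution on the set of pairs $(g,\gamma)$ with $\gamma$ a path from the origin to $g(n,m)$ that meets a wall, obtained by locating the first wall contact and reflecting the initial segment.

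With the signed orbit sum in hand, the remainder is a direct computation. Applying the reflections to the endpoint gives $T^k(n,m)=(n+kd,\,m-kd)$, so the even elements contribute $\sum_{k\in\Z}\binom{n+m}{n+kd}$, while $T^kR_U(n,m)=(m-t-1+kd,\,n+t+1-kd)$, so the odd elements contribute $-\sum_{k\in\Z}\binom{n+m}{n+t+1-kd}$. Since both sums range over all $k\in\Z$, replacing $k$ by $-k$ is immaterial, and these are exactly the two families appearing in \eqref{le:Wa} with the stated signs. The step I expect to be the main obstacle is the justification of the strip reflection principle itself: the single reflection across one line is routine, but verifying that the alternating inclusion--exclusion across two alternating walls telescopes correctly---so that every path leaving the strip is counted with total weight zero and the surviving sign attached to each orbit point is precisely $\operatorname{sgn}(g)$---is the delicate point. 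Once that combinatorial bookkeeping is set up, the explicit reflection formulas and the orbit computation are mechanical.
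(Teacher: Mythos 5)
Your proposal is correct and is essentially the paper's own argument: the paper likewise performs the alternating two-wall inclusion--exclusion (its sets $A_i$, $B_i$ are exactly your classes of paths meeting $U,L,\ldots$ alternately in order) and evaluates each class by iterated reflection, which is precisely your signed sum over the infinite dihedral group $\langle R_U,R_L\rangle$ written out term by term. Incidentally, your explicit orbit computations $T^k(n,m)=(n+kd,\,m-kd)$ and $T^kR_U(n,m)=(m-t-1+kd,\,n+t+1-kd)$ are the correct ones: the binomial coefficients displayed for $|A_{2j+1}|$ and $|B_{2j+1}|$ in the paper's sketch contain index slips (an $n$ standing where an $m$ is needed), yet both derivations land on the same formula \eqref{le:Wa}.
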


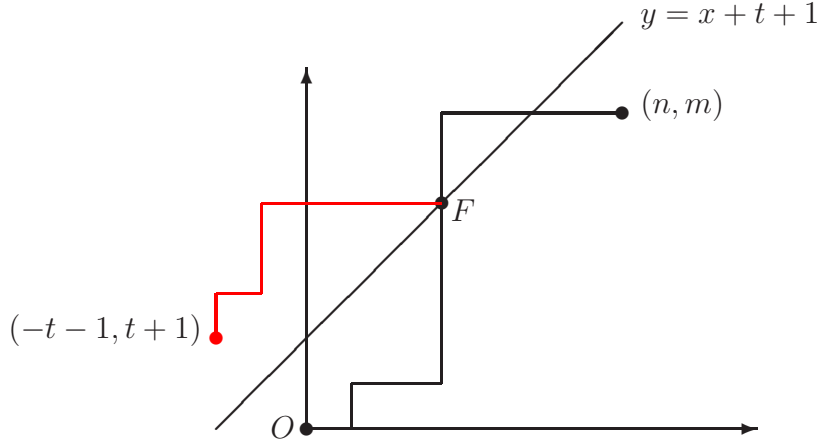
\begin {figure}
\setlength {\unitlength} {1mm}
\begin {picture} (120,60) \setlength {\unitlength} {1.2mm}
\thicklines
\put(30,0){\circle*{1.5}}\put(26,-1){$O$}
\put(30,0){\vector(0,1){40}}\put(30,0){\vector(1,0){50}}
\put(20,0){\line(1,1){45}}\put(67,45){$y=x+t+1$}
\put(35,0){\line(0,1){5}}\put(35,5){\line(1,0){10}}\put(45,5){\line(0,1){30}}
\put(45,35){\line(1,0){20}}
\put(65,35){\circle*{1.5}}\put(67,35){$(n,m)$}
\put(46,23){$F$}\put(45,25){\circle*{1.5}}

\put(-3,10){$(-t-1,t+1)$}

\red{\put(20,10){\circle*{1.5}}
\put(20,10){\line(0,1){5}}\put(20,15){\line(1,0){5}}\put(25,15){\line(0,1){10}}
\put(25,25){\line(1,0){20}}}
\end{picture}
\caption{Reflection of the segment of the path from $O$ to the first reaching  point $F$ with respect to the line $y=x+t+1$.}
\label{fig2}
\end {figure}

\begin{proof}[Sketch of proof]
Let $T$ and $S$ be the  lines $y=x+t+1$ and $y=x-s-1$, respectively.
Let $A_1$ denote the set of lattice paths in $\L(n,m)$  reaching $T$ at least once, regardless of what happens at any other step, and let $A_2$ denote the set of lattice paths in $\L(n,m)$ reaching $T,S$ at least once in the order specified. 
 Generally, let $A_i$ denote the set of lattice paths in $\L(n,m)$ reaching $T,S, \ldots$, alternatively ($i$ times) at least once in the specified order. Let $B_i$ be the set defined in the same way as $A_i$ with $S,T$ interchanged. A standard Inclusive-Exclusive principle argument  yields:
\begin{equation}\label{L}
|\L(n,m;t,s)|={n+m \choose n}+\sum_{i\geq 1}(-1)^i(|A_i|+|B_i|).
\end{equation}
As 
the symmetric point of $(a,b)$ with respect to the line $y=x+c$ is $(b-c, a+c)$,
by repeatedly applying  the reflection principle argument,
we obtain
$$
|A_{2j}|={n+m \choose n+j(t+s+2)}, \quad |A_{2j+1}|={n+m \choose n-j(t+s+2)-(t+1)},
$$
 and 
$$
|B_{2j}|={n+m \choose n-j(t+s+2)}, \quad |B_{2j+1}|={n+m \choose n+j(t+s+2)-(s+1)}.
$$
Substituting this into \eqref{L}  leads to \eqref{le:Wa}.
\end{proof}
\begin{lem}\label{lem4}
For each positive integers $n$ and $k$,
\begin{align}\label{eq4}
|\Hom^1(P_{n},P_{k})|=
\sum_{j\in \mathbb{Z}}
\left({n-1 \choose \left\lceil\frac{n-1}{2}\right\rceil-j(k+1)}-
{n-1 \choose \left\lfloor\frac{n+k}{2}\right\rfloor -j(k+1)}
\right).
\end{align} 
\end{lem}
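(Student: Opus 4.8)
The plan is to feed the closed form \eqref{le:Wa} for the cardinality of $\L(n,m;t,s)$ into the summation \eqref{key1} and then collapse the resulting double sum by telescoping. A lattice path in $\L(l,\,n-1-l;\,0,\,k-1)$ uses $l$ east-steps and $n-1-l$ north-steps, so applying \eqref{le:Wa} with total length $n-1$, with $t=0$ and $s=k-1$ (hence $t+s+2=k+1$ and $t+1=1$) gives
\[
|\L(l,\,n-1-l;\,0,\,k-1)|=\sum_{j\in\Z}\left({n-1 \choose l-j(k+1)}-{n-1 \choose l-j(k+1)+1}\right).
\]

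First I would insert this identity into \eqref{key1} and interchange the two summations. The interchange is harmless: for each $l$ in the finite index range only finitely many $j$ contribute a nonzero binomial, so the whole object is a finite sum. After the swap I fix $j$, abbreviate $a=-j(k+1)$ and $L=\lceil\frac{n-1}{2}\rceil$, and use that the inner sum over $l$ telescopes,
\[
\sum_{l=L}^{K-1}\left({n-1 \choose l+a}-{n-1 \choose l+a+1}\right)={n-1 \choose L+a}-{n-1 \choose K+a}.
\]
Since $k\geq1$ forces $K\geq L$, the index range is legitimate (and an empty range correctly contributes $0$). Summing back over $j$ then produces
\[
|\Hom^1(P_n,P_k)|=\sum_{j\in\Z}\left({n-1 \choose \lceil\frac{n-1}{2}\rceil-j(k+1)}-{n-1 \choose K-j(k+1)}\right).
\]

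The one real obstacle is that \eqref{key1} carries the upper limit $K=\min(\lfloor\frac{n+k}{2}\rfloor,n)$ with its $\min$, whereas the target \eqref{eq4} has $\lfloor\frac{n+k}{2}\rfloor$ outright, so the two must be reconciled. I would split on which argument of the $\min$ is active. When $k\leq n+1$ one checks $\lfloor\frac{n+k}{2}\rfloor\leq n$, so $K=\lfloor\frac{n+k}{2}\rfloor$ and the display above is already \eqref{eq4}. When $k\geq n+2$ one has $K=n$, and here I would argue that the \emph{entire} subtracted sum is identically zero: because $k+1>n-1$, the index $n-j(k+1)$ lies outside $[0,n-1]$ for every $j\in\Z$, killing each binomial ${n-1 \choose n-j(k+1)}$; the same size estimate shows $\lfloor\frac{n+k}{2}\rfloor-j(k+1)$ also escapes $[0,n-1]$ for all $j$, so the subtracted sum in \eqref{eq4} vanishes as well. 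Hence the two expressions agree in this regime too, and \eqref{eq4} follows in all cases.
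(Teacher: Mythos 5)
Your proposal is correct and follows essentially the same route as the paper: substitute the reflection-principle formula \eqref{le:Wa} into \eqref{key1}, interchange the sums, telescope over $l$, and then dispose of the $\min$ in $K$ by a case analysis (the paper splits at $n\geq k$ versus $k>n$, you split at $k\leq n+1$ versus $k\geq n+2$, but in both treatments the large-$k$ case is handled by showing every relevant binomial coefficient vanishes). No substantive difference.
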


%%%%%%%%%%%

%%%%%%%%%

\begin{proof}
Substituting  \eqref{le:Wa}  into \eqref{key1} and 
exchanging  the order of the summations,
\begin{align}
|\Hom^1(P_{n},P_{k})|
&=\sum_{j\in \mathbb{Z}}\sum\limits_{l=\lceil\frac{n-1}{2}\rceil}^{K-1}
\left({n-1 \choose l-j(k+1)}-{n-1 \choose l+1-j(k+1)}\right)\nonumber\\
&
=\sum_{j\in \mathbb{Z}}\left({n-1 \choose \left\lceil\frac{n-1}{2}\right\rceil-j(k+1)}-{n-1 \choose
K-j(k+1)}\right).\label{eq:last}
\end{align} 
Now, if $n\geq k$,  then $K=\lfloor\frac{n+k}{2}\rfloor$, 
\begin{align}\label{eq:note}
{n-1\choose K-j(k+1)}={n-1 \choose \lfloor\frac{n+k}{2}\rfloor-j(k+1)},
\end{align}
if $k>n$, then $K=n$, since
\begin{equation*}
{n-1 \choose n-j(k+1)}={n-1 \choose \lfloor\frac{n+k}{2}\rfloor-j(k+1)}=0,
\end{equation*}
the equation \eqref{eq:note} is also valid.
 Hence \eqref{eq:last} and  \eqref{eq4} are  equal.
\end{proof}

%Now we are ready to prove Theorem~1. 
\begin{proof}[{\bf Proof of  Theorem~1}]
For $f\in \Hom(P_{i+1},P_{k})$ with $i=1,\ldots, n-1$, consider the following  three cases:
\begin{itemize}
\item[(i)]  if $f(i)=1$,  then $f(i+1)=2$ and there are $|\Hom^1(P_{i},P_{k})|$ such homomorphisms.
\item[(ii)]  if $f(i)=k$,  then $f(i+1)=k-1$ and there are $|\Hom^k(P_{i},P_{k})|$ such homomorphisms.
\item[(iii)]  if $f(i)=j$ with $j\in \{2, 3, . . . , k-1\}$, then $f(i+1)=j-1$ or $j+1$ and there are $2|\Hom^j(P_{i},P_{k})|$ such homomorphisms.
\end{itemize}
Summarizing,  we get
%\begin{align}\label{eq:hom}
$$
|\Hom(P_{i+1},P_{k})|=|\Hom^1(P_{i},P_{k})|+2\sum\limits_{j=2}^{k-1}|\Hom^j(P_{i},P_{k})|+|\Hom^k(P_{i},P_{k})|.
$$
%\end{align} 
Since  $|\Hom(P_{i},P_{k})|=\sum\limits_{j=1}^{k}|\Hom^j(P_{i},P_{k})|$ and 
 $|\Hom^1(P_{i},P_{k})|=|\Hom^k(P_{i},P_{k})|$,  it follows that 
 $$|\Hom(P_{i+1},P_{k})|=2|\Hom(P_{i},P_{k})|-2|\Hom^1(P_{i},P_{k})|.
 $$
  By iteration, we derive
\begin{align}\label{eq5}
 |\Hom(P_{n},P_{k})|&=2^{n-1}|\Hom(P_{1},P_{k})| - \sum\limits_{i=1}^{n-1}2^{n-i}|\Hom^1(P_{i},P_{k})|\nonumber\\
 &=k \times 2^{n-1} - \sum\limits_{i=1}^{n-1}2^{n-i}|\Hom^1(P_{i},P_{k})|.
 \end{align}
Plugging \eqref{eq4} into  \eqref{eq5}, we obtain \eqref{eq:theorem2} .
\end{proof}

\noindent{\bf Remark.}  The key point in the above proof is  to reduce the counting problem of 
$|\Hom(P_{n}, P_{k})|$ to $|\Hom^1(P_{i},P_{k})|$  for $i=1,\ldots, n-1$. 
Arworn and Wojtylak \cite{aw} give a formula for $|\Hom(P_{n}, P_{k})|=\sum_{j=1}^{k} |\Hom^j(P_{n},P_{k})|$
 without using this reduction. Moreover, their expression for $|\Hom^j(P_{n},P_{k})|$ depends on the parity of $n-j$:
\begin{align}\label{eq:two}
|\Hom^j(P_{n},P_{k})|=\begin{cases}
\sum_{t=-n+1}^{n-1}(-1)^t\sum_{u=0}^{\lfloor\frac{k-1}{2}\rfloor}{n-1 \choose \frac{n-j-1}{2}+u+\lceil\frac{(k+1)t}{2}\rceil}& \text{if $n-j$ is odd}, \\
\sum_{t=-n+1}^{n-1}(-1)^t\sum_{u=0}^{\lceil\frac{k-1}{2}\rceil}{n-1 \choose \lfloor\frac{n-j-1}{2}\rfloor+u+\lfloor\frac{(k+1)t}{2}\rfloor}&
\text{ if $n-j$ is even}. 
 \end{cases}
 \end{align}
 Note that  Lemma~\ref{eq4} unifies the two cases in \eqref{eq:two}  
 when $j=1$.
\medskip

When $k=n$, we can deduce  a simple formula for the number of endomorphisms of $P_n$ (see \texttt{http://oeis.org/A102699}) by applying 
two binomial coefficient identities.
\begin{lem}\label{lem6}
 For $m\geq 1$, the following identities hold
\begin{align}
\sum_{k=0}^{m-1}{2k\choose  k} 2^{2m-1-2k}&=m{2\,m\choose m},\label{eq1}\\
\sum_{k=0}^{m-1}{2k+1\choose  k} 2^{2m-1-2k}&= (m+1){2\,m+1\choose m}-{2}^{2\,m}.\label{eq2}
\end{align}
\end{lem}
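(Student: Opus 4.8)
The plan is to prove both identities by induction on $m$, exploiting the fact that the factor $2^{2m-1-2k}$ makes each left-hand side satisfy a clean first-order recurrence in $m$.

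For \eqref{eq1}, write $S_m=\sum_{k=0}^{m-1}\binom{2k}{k}2^{2m-1-2k}$. Passing from $m$ to $m+1$ multiplies every term already present by $4$ and appends a single new summand at $k=m$, namely $2\binom{2m}{m}$; hence $S_{m+1}=4S_m+2\binom{2m}{m}$. The base case $m=1$ gives $S_1=2=1\cdot\binom{2}{1}$. It then suffices to check that the proposed value $m\binom{2m}{m}$ obeys the same recurrence, which reduces to the elementary ratio $\binom{2m+2}{m+1}=\frac{2(2m+1)}{m+1}\binom{2m}{m}$: multiplying through by $m+1$ yields $(4m+2)\binom{2m}{m}$, exactly as the recurrence demands.

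The second identity \eqref{eq2} is handled in the same way. With $T_m=\sum_{k=0}^{m-1}\binom{2k+1}{k}2^{2m-1-2k}$ the identical bookkeeping gives $T_{m+1}=4T_m+2\binom{2m+1}{m}$, and the base case is again immediate. The additional constant $-2^{2m}$ in the target expression $(m+1)\binom{2m+1}{m}-2^{2m}$ causes no trouble, since under the factor of $4$ it transforms as $-2^{2m}\mapsto-2^{2m+2}$, which is precisely what the recurrence produces; the only thing left to verify is $(m+2)\binom{2m+3}{m+1}=(4m+6)\binom{2m+1}{m}$, and this follows at once from the ratio $\binom{2m+3}{m+1}=\frac{2(2m+3)}{m+2}\binom{2m+1}{m}$.

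I anticipate no real difficulty: the substance of the argument is just the two binomial-ratio simplifications, each a one-line factorial cancellation, so the only thing to watch is keeping the constant term in \eqref{eq2} correctly aligned with the factor of $4$. Should a non-inductive proof be preferred, one could instead start from the generating functions $\sum_{k\geq0}\binom{2k}{k}x^k=(1-4x)^{-1/2}$ and $\sum_{k\geq0}\binom{2k+1}{k}x^k=\frac{1}{2x}\big((1-4x)^{-1/2}-1\big)$ and extract the relevant coefficients after encoding the weight $2^{-2k}$ by a geometric factor; but the induction is shorter and entirely self-contained, so it is the route I would take.
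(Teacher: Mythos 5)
Your proof is correct and takes essentially the same route as the paper's: induction on $m$, where passing from $m$ to $m+1$ multiplies the existing sum by $4$ and appends the single new term $2\binom{2m}{m}$ (resp.\ $2\binom{2m+1}{m}$), after which a one-line binomial-ratio computation closes the induction. You are in fact slightly more thorough, since the paper carries this out only for \eqref{eq1} and dismisses \eqref{eq2} as ``similar,'' whereas you explicitly track the constant $-2^{2m}$ through the factor of $4$.
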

\begin{proof}
We prove \eqref{eq1} by induction on $m$. 
Clearly \eqref{eq1} is true for $m=1$. If it is true for $m\geq 1$, then for $m+1$,  
the left-hand side after cutting out the last term,   can be  written as
\begin{align*}
2^2\sum_{k=0}^{m-1}{2k\choose  k} 2^{2m-1-2k}+ 2{2m\choose m}&=4m{2\,m\choose m}+2{2\,m\choose m}\\
&=(m+1){2m+2\choose m+1}.
\end{align*}
Thus \eqref{eq1} is  proved.  Similarly  we can prove \eqref{eq2}.
\end{proof}

\begin{proposition}
For $n\geq 1$, 
\begin{align}
&|\End(P_{n})|
=\begin{cases}
(n+1)2^{n-1}-(2n-1)\binom{n-1}{(n-1)/2}
&\text{ if $n$ is odd,}\\
(n+1)2^{n-1}-n\binom{n}{n/2}
&\text{ if $n$ is even.}\\
\end{cases}
\label{eq:eo}
\end{align}
\end{proposition}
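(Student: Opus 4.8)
The plan is to specialize Theorem~1 to the case $k=n$, so that $|\End(P_n)|=|\Hom(P_n,P_n)|$, and then simplify the resulting double sum using the two binomial identities of Lemma~\ref{lem6}. Setting $k=n$ in \eqref{eq:theorem2} gives
\[
|\End(P_n)|=n\,2^{n-1}-\sum_{i=0}^{n-2}2^{n-1-i}\sum_{j\in\Z}\left(\binom{i}{\lceil\frac i2\rceil-j(n+1)}-\binom{i}{\lfloor\frac{i+n+1}{2}\rfloor-j(n+1)}\right).
\]
The first step is to argue that the inner sum over $j$ collapses to its $j=0$ term. Indeed, for $0\le i\le n-2$ the upper index $i$ of each binomial coefficient is strictly smaller than the step $n+1$, so any $j\neq 0$ forces the lower index outside the range $[0,i]$ and contributes $0$; moreover the second binomial coefficient $\binom{i}{\lfloor (i+n+1)/2\rfloor}$ also vanishes since $\lfloor(i+n+1)/2\rfloor>i$ in this range. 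Thus the inner sum reduces to the single term $\binom{i}{\lceil i/2\rceil}$, and we are left with
\[
|\End(P_n)|=n\,2^{n-1}-\sum_{i=0}^{n-2}2^{n-1-i}\binom{i}{\lceil i/2\rceil}.
\]

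The second step is to split the sum according to the parity of $i$ and apply Lemma~\ref{lem6}. For even $i=2k$ we have $\binom{2k}{\lceil k\rceil}=\binom{2k}{k}$, and for odd $i=2k+1$ we have $\binom{2k+1}{\lceil(2k+1)/2\rceil}=\binom{2k+1}{k+1}=\binom{2k+1}{k}$. Now the two cases of the proposition split by the parity of $n$: one must track carefully how far the index $i$ runs (up to $n-2$) and which parity class the top term falls into. Writing the remaining sum as a sum over the even $i$'s plus a sum over the odd $i$'s, each of the form $\sum_k\binom{2k}{k}2^{\cdots-2k}$ or $\sum_k\binom{2k+1}{k}2^{\cdots-2k}$, one matches the exponent of $2$ in \eqref{eq1} and \eqref{eq2} after factoring out the appropriate power of $2$, and substitutes the closed forms $m\binom{2m}{m}$ and $(m+1)\binom{2m+1}{m}-2^{2m}$.

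The main obstacle, and where care is required, is the bookkeeping of parities and index limits: when $n$ is odd the sum $\sum_{i=0}^{n-2}$ ends at an odd value of $i$, whereas when $n$ is even it ends at an even value, so the even-indexed and odd-indexed subsums have different upper limits in the two cases. Combining the two closed-form contributions and collecting the central binomial coefficient $\binom{n-1}{(n-1)/2}$ (odd case) or $\binom{n}{n/2}$ (even case)—using the identity $\binom{2m+1}{m}=\tfrac12\binom{2m+2}{m+1}$ to merge the even and odd pieces into a single central coefficient—should yield exactly the two expressions in \eqref{eq:eo}. The arithmetic here is routine once the parity split is set up correctly, so I expect the verification to reduce to matching coefficients of $2^{n-1}$ and of the central binomial term in each parity case.
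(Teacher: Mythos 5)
Your proposal is correct and follows essentially the same route as the paper: specialize Theorem~1 to $k=n$, observe that the sum over $j$ collapses to the single term $\binom{i}{\lceil i/2\rceil}$ (a step the paper asserts implicitly but you justify explicitly), then split by the parity of $i$ and apply the two identities of Lemma~\ref{lem6}. The final bookkeeping you leave as routine is exactly what the paper carries out, and it does close as you expect, using $2\binom{2m-1}{m-1}=\binom{2m}{m}$ and $(m+1)\binom{2m+1}{m}=(2m+1)\binom{2m}{m}$ to collect the central binomial coefficients.
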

\begin{proof}  
When $k=n$, Theorem~\ref{th:1} becomes
 \begin{equation}
|\End(P_{n})|=n\times2^{n-1}-\sum\limits_{i=0}^{n-2}
2^{n-1-i}\times {i \choose \lceil\frac{i}{2}\rceil}.
\label{eq:eo1}
\end{equation}
By Lemma~\ref{lem6}, if $n$ is even, say $n=2m$,  then
\begin{align*}
\sum\limits_{i=0}^{n-2}
2^{n-1-i}\times {i \choose \lceil\frac{i}{2}\rceil}&=
\sum_{k=0}^{m-2}{2k+1\choose  k} 2^{2m-2-2k}
+\sum_{k=0}^{m-1}{2k\choose  k} 2^{2m-1-2k}\\
&= 2m {2\,m-1\choose m-1}-{2}^{2\,m-1}+m{2\,
m\choose m};
\end{align*}
if $n$ is odd, say $n=2m+1$, then
\begin{align*}
\sum\limits_{i=0}^{n-2}
2^{n-1-i}\times {i \choose \lceil\frac{i}{2}\rceil}&=
\sum_{k=0}^{m-1}{2k+1\choose  k} 2^{2m-1-2k}
+\sum_{k=0}^{m-1}{2k\choose  k} 2^{2m-2k}\\
&= (m+1){2\,m+1\choose m}-{2}^{2\,m}+2m{2\,m\choose m}.
\end{align*}
Substituting these into \eqref{eq:eo1} we obtain the desired result.
\end{proof}

%%%%%%%%%%%%
\section{Proof of theorem~\ref{th:2}}
We first  establish   three lemmas.
For any $n\geq 1$, let $[n]=\{1, \ldots, n\}$, which is   $V(P_n)$.
Denote by $\S_n$  the set of
 permutations of $[n]$.
For $1\leq k \leq n$,  denote by $\Epi(P_n, P_k)$  the set
of epimorphisms from $P_n$ to $P_k$, namely, 
\begin{align}\label{eq:a}
\Epi(P_n, P_k)=\{f \in \Hom(P_{n}, P_{k})\,:\, f([n])=[k]\}.
\end{align}
\begin{lem} \label{lem9} 
For $1\leq k \leq n-1$,
\begin{align}
l_{k}(n)=|\Epi(P_{n},P_{n-k+1})|/2.
\end{align}
\end{lem}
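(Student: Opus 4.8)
\textbf{Proof proposal for Lemma~\ref{lem9}.}
The plan is to set up an explicit two-to-one correspondence between epimorphisms from $P_n$ onto $P_{n-k+1}$ and endomorphism-induced partitions of $V(P_n)$ into exactly $n-k+1$ blocks. First I would unwind the indices: by definition \eqref{eq:b}, $l_k(n)$ counts the partitions $\rho \in \mathscr{C}(P_n)$ with $|\rho| = n-k+1$, so I must show that the number of such partitions equals $|\Epi(P_n, P_{n-k+1})|/2$. The natural bridge is that a partition with $m := n-k+1$ blocks is induced by an endomorphism whose image has exactly $m$ vertices, and since $P_n$ is connected the image of any homomorphism is a connected subgraph, hence itself a path; a path on $m$ vertices sitting inside $P_n$ is isomorphic to $P_m$. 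Thus every $\rho$ with $|\rho|=m$ arises from some $f \in \End(P_n)$ whose image is a copy of $P_m$, and restricting the codomain turns $f$ into an epimorphism onto $P_m \cong P_{n-k+1}$.

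The key steps, in order, are as follows. First I would observe that the map $f \mapsto \rho_f$ (sending an endomorphism to its induced partition) descends to a map from $\Epi(P_n, P_m)$-type data to $\{\rho : |\rho| = m\}$, and I must control its fibres. Given a partition $\rho$ with $m$ blocks that is induced by \emph{some} endomorphism, the induced quotient graph on the blocks must be isomorphic to $P_m$ (it is the image, which is a path), and there are exactly two graph isomorphisms $P_m \to P_m$ — the identity and the reversal $i \mapsto m+1-i$. Composing a fixed realizing homomorphism with each of these two automorphisms yields exactly two distinct epimorphisms $P_n \to P_m$ inducing the same $\rho$ (distinct because $m \ge 2$ when $k \le n-1$, so the reversal is nontrivial). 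Conversely any two epimorphisms inducing the same partition differ only by an automorphism of the target path, so the fibre has size exactly $2$. This gives the factor of $2$ and hence the claimed identity.

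The main obstacle I anticipate is the careful justification that the fibre size is \emph{exactly} two, with no over- or under-counting at the boundary. Two points need attention: (a) one must verify that the image of an epimorphism onto $P_m$, viewed as a subgraph of $P_n$, is genuinely a path so that its automorphism group is $\{\mathrm{id}, \text{reversal}\}$ of order $2$ — this uses that homomorphic images of connected graphs are connected and that $P_m$ is its own quotient; and (b) one must confirm $m = n-k+1 \ge 2$, which holds precisely because $k \le n-1$, so that the two automorphisms are genuinely distinct and the correspondence is two-to-one rather than one-to-one. I would also need to check that the target identification $P_m \cong P_{n-k+1}$ respects the counting, i.e.\ that post-composing with an isomorphism of targets is a bijection on epimorphism sets, which is routine. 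Once the fibres are pinned down to size $2$, dividing $|\Epi(P_n, P_{n-k+1})|$ by $2$ yields $l_k(n)$ and completes the proof.
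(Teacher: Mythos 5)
Your opening reduction is the same as the paper's: connectedness forces the image of an endomorphism to be a set of consecutive vertices of $P_n$, i.e.\ a subpath isomorphic to $P_m$ with $m=n-k+1$, so each partition counted by $l_k(n)$ is induced by some element of $\Epi(P_n,P_m)$ (and conversely, composing with an inclusion $P_m\hookrightarrow P_n$ shows every such partition lies in $\mathscr{C}(P_n)$). Where you genuinely differ is the two-to-one step. The paper proves it combinatorially: ordering the blocks by their minima, an epimorphism amounts to a linear arrangement of the blocks in which blocks containing consecutive integers are adjacent, and an insertion argument ($A_{i+1}$ must be placed next to the block containing $\min(A_{i+1})-1$, which must sit at one of the two ends of the current word) shows there are exactly two such arrangements, the only freedom being $A_1A_2$ versus $A_2A_1$. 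You instead argue that the fibre over a partition is an orbit under post-composition by $\mathrm{Aut}(P_m)\cong\Z/2\Z$. That route does work and is more conceptual, but its crux --- ``any two epimorphisms inducing the same partition differ by an automorphism of the target'' --- is exactly the content of the lemma and is asserted rather than proved in your plan; your supporting remark ``the quotient graph on the blocks is isomorphic to $P_m$ (it is the image)'' is not by itself a proof. To close the gap, note that if $f,g\in\Epi(P_n,P_m)$ have the same fibres then $g=\phi\circ f$ for a vertex bijection $\phi$ of $[m]$, and $\phi$ is a graph automorphism provided $f$ is surjective on \emph{edges}, not merely on vertices: this edge-surjectivity holds because $f(1),\ldots,f(n)$ is a walk in $P_m$ that visits both endpoints, and any walk in a path passing from a vertex $\le i$ to a vertex $\ge i+1$ must traverse the edge $\{i,i+1\}$; hence $\phi$ maps the $m-1$ edges of $P_m$ bijectively to themselves and so $\phi\in\mathrm{Aut}(P_m)$. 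Combined with your observation that $m\ge 2$ (from $k\le n-1$) and surjectivity make $f$ and its reversal distinct, this completes your argument. The trade-off: the paper's proof is elementary and entirely self-contained, while yours is shorter and generalizes (fibres of the induced-partition map are $\mathrm{Aut}(H)$-orbits for edge-surjective homomorphisms onto any target $H$), at the cost of needing the edge-surjectivity observation, which is precisely where the path structure enters and must be made explicit.
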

\begin{proof} 
Let $r=n-k+1$.  Denote by $\End_r(P_n)$  the subset of endomorphisms in $\End(P_n)$ such that $|f([n])|=r$ and $\L_k(n)$ the set of partitions induced
by endomorphisms in $\End_r(P_n)$.  By definition (see \eqref{eq:b}),   the integer $l_k(n)$ is the cardinality of $\L_k(n)$.

For each $f\in \End_r(P_n)$,  if 
 $f([n])=\{a, a+1, \ldots, a+r-1\}$ for some integer $a\in [n-r+1]$, 
 we define $\bar f\in \Epi(P_n, P_r)$ by $\bar f(x)=f(x)-a+1$. Then $f$ and $\bar f$ induce the same partition in $\L_k(n)$.
  Hence, we can consider $\L_k(n)$  as the set of partitions induced
by epimorphisms in 
 $\Epi(P_n, P_r)$.  

If  $\{A_1, \ldots, A_{r}\}$ is  a partition of $[n]$ 
induced by an 
 $f\in \Epi(P_n, P_r)$, then,  we can assume that 
$\min(A_1)\leq  \min(A_2)\leq \ldots \leq \min(A_{r})$. 
 Hence, we can
 identify $f$ 
with a permutation $\sigma\in \S_r$  by
$f(A_{\sigma(i)})=i$ for $i\in [r]$.  
Moreover, two blocks $A_i$ and $A_j$ are  adjacent in the arrangement 
$A_{\sigma(1)}\ldots A_{\sigma(r)}$ if and only if 
there are   two 
 consecutive integers $\alpha$ and $\beta$ such that $\alpha\in A_i$ and $\beta\in A_j$. 
We  show that there are exactly  two such permutations for a given  induced partition. 

Starting from a  partition $\{A_1, \ldots, A_{r}\}$ of $[n]$ 
induced by an $f\in \Epi(P_n, P_r)$, 
we  arrange  step by step the blocks $A_1, \ldots, A_i$ for $2\le i\le r$ such that $A_i$ is adjacent 
to the block $A_j$ containing $\min(A_i)-1$ and $j<i$.
Since  $\min(A_1)=1$ and $\min(A_2)=2$, there are two ways to 
arrange $A_1$ and $A_2$: $A_1A_2$ or $A_2A_1$.  
%For example, if  $A_1A_2$ is such an 
%arrangement,  then we must  have $A_3 A_1A_2$ or $A_1A_2A_3$ according to 
%$\min(A_3)-1\in A_1$ or $\min(A_3)-1\in A_2$.  
 Suppose that  the first $i$ ($\geq 2$) blocks have been arranged as 
$W_i:=A_{\sigma_i(1)}\ldots A_{\sigma_i(i)}$ with $\sigma_i\in \S_i$, then 
 $\min(A_{i+1})-1$ must belong to $A_{\sigma_i(1)}$ or $A_{\sigma_i(i)}$ because 
 any  two adjacent blocks in $W_i$ should stay adjacent in 
 all the $W_j$ for $i\leq j\leq r$.
 Hence there is only one way to insert $A_{i+1}$ in $W_i$: 
 at the left of  $W_i$ (resp.  right of  $W_i$)
 if $\min(A_{i+1})-1\in A_{\sigma_i(1)}$ (resp.  $A_{\sigma_i(i)}$) for $i\geq 2$.
  As there are two possibilities for $i=2$ we have thus proved that there are exactly  two 
 corresponding epimorphisms in $\Epi(P_n, P_r)$ 
  for a given  induced partition with $r$ blocks. 
 For example, starting from the induced   partition $\{\{1, 3, 5, 9\}, \{2, 4, 10\}, \{6, 8\}, \{7\}, \{11\}\}$  of $V(P_{11})$, 
 we obtain  the two corresponding arrangements:
  $$
  \{7\}\{6, 8\}\{1, 3, 5, 9\}\{2, 4, 10\}\{11\}\quad  \textrm{and} \quad
  \{11\}\{2, 4, 10\}\{1, 3, 5, 9\}\{6, 8\}\{7\}.
  $$
  This is  the desired result.
\end{proof}

%%%%%%%%%%%%%%%%%%
\begin{lem}\label{lem7}  For $1\leq k \leq n$, 
$$
l_{k}(n)
=\frac{1}{2}|\Hom(P_{n},P_{n-k+1})|-|\Hom(P_{n},P_{n-k})|+\frac{1}{2}|\Hom(P_{n},P_{n-k-1})|.
$$
\end{lem}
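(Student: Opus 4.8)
The plan is to combine Lemma~\ref{lem9}, which reduces $l_k(n)$ to a count of epimorphisms, with a finite-difference inversion that expresses the number of epimorphisms in terms of the numbers of homomorphisms onto shorter paths. Writing $r=n-k+1$, Lemma~\ref{lem9} gives $l_k(n)=|\Epi(P_n,P_r)|/2$, so it suffices to establish
$$
|\Epi(P_n,P_r)|=|\Hom(P_n,P_r)|-2|\Hom(P_n,P_{r-1})|+|\Hom(P_n,P_{r-2})|,
$$
where we adopt the convention $|\Hom(P_n,P_m)|:=0$ for $m\le 0$ (the empty target admits no homomorphism). Substituting $r=n-k+1$ and dividing by $2$ then yields precisely the stated identity.

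First I would describe the image of an arbitrary homomorphism. Since $f(1),f(2),\ldots,f(n)$ is a walk in $P_r$ (each consecutive pair being an edge), the set of visited vertices $f([n])$ is an \emph{interval} $\{a,a+1,\ldots,b\}\subseteq[r]$. Regarding this interval as a copy of $P_s$ with $s=b-a+1$, the map $f$ is an epimorphism onto it, and the order-preserving identification of the interval with $[s]$ shows that the number of homomorphisms whose image is exactly this interval equals $|\Epi(P_n,P_s)|$. Since there are $r-s+1$ intervals of each size $s$ in $[r]$, summing over $s$ gives the key triangular relation
$$
|\Hom(P_n,P_r)|=\sum_{s=1}^{r}(r-s+1)\,|\Epi(P_n,P_s)|.
$$

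Next I would invert this relation by second differences. Abbreviating $h(r)=|\Hom(P_n,P_r)|$ and $e(s)=|\Epi(P_n,P_s)|$, with $h(m)=0$ for $m\le 0$, a one-line telescoping computation gives $h(r)-h(r-1)=\sum_{s=1}^{r}e(s)$, and differencing once more yields $e(r)=h(r)-2h(r-1)+h(r-2)$. This is exactly the displayed claim above, and combining it with $l_k(n)=e(r)/2$ from Lemma~\ref{lem9} finishes the proof.

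I expect the only delicate points to be bookkeeping. The first is verifying that the image is genuinely an interval rather than a disconnected set; this is where the connectivity of $P_n$ enters, via the observation that the sequence of images is a walk. The second is handling the boundary cases $r\in\{1,2\}$, where the shifted terms $h(r-1)$ and $h(r-2)$ must be read through the convention $h(m)=0$ for $m\le 0$ (consistent with Lemma~\ref{lem9} holding for $1\le k\le n-1$, i.e.\ $r\ge 2$). Both are routine once the interval description of the image is fixed, so the substantive content is the counting identity and its finite-difference inversion.
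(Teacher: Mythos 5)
Your proof is correct and arrives at the same key identity as the paper,
$|\Epi(P_n,P_r)|=|\Hom(P_n,P_r)|-2|\Hom(P_n,P_{r-1})|+|\Hom(P_n,P_{r-2})|$ with $r=n-k+1$, but by a genuinely different argument. The paper gets this identity in one step by inclusion--exclusion on the \emph{non-surjective} homomorphisms: it writes $\Hom(P_n,P_r)\setminus\Epi(P_n,P_r)=A\cup B$, where $A$ consists of the maps missing vertex $r$ and $B$ of the maps missing vertex $1$ (connectivity of the image is used implicitly here, just as in your argument), and then uses $|A|=|B|=|\Hom(P_n,P_{r-1})|$ together with $|A\cap B|=|\Hom(P_n,P_{r-2})|$. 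You instead stratify \emph{all} homomorphisms by their exact image interval, which gives the triangular relation $|\Hom(P_n,P_r)|=\sum_{s=1}^{r}(r-s+1)\,|\Epi(P_n,P_s)|$, and then recover the identity by taking second differences; your telescoping computation checks out, including the boundary cases under your convention $|\Hom(P_n,P_m)|=0$ for $m\le 0$. The paper's route is more economical; yours is slightly longer but yields the more general triangular identity as a byproduct, and your explicit zero convention for degenerate targets is cleaner bookkeeping than the paper, which leaves the cases $n-k-1\le 0$ implicit. One loose end you share with the paper: Lemma~\ref{lem9} is stated only for $1\le k\le n-1$ (i.e.\ $r\ge 2$), while the present lemma is asserted for $1\le k\le n$; at $k=n$ both sides vanish for $n\ge 2$, so the gap is harmless in either treatment, and you at least flag it explicitly.
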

\begin{proof}

By definition we have 
$\Hom(P_{n},P_{k})\setminus \Epi(P_{n},P_{k})=A\cup B$,
where 
\begin{align*}
A&= \{f\in \Hom(P_{n},P_{k})\,:\, f([n])\subseteq [k-1]\},\\
B&=\{f\in \Hom(P_{n},P_{k})\, :\, f([n])\subseteq [k]\setminus[1]\}.
\end{align*}
Hence
\begin{align}\label{le:hi}
| \Hom(P_{n},P_{k})|-| \Epi(P_{n},P_{k})|=|A|
+| B|-|A\cap B|.
\end{align}
Since 
$|A|=| B|=|\Hom(P_{n},P_{k-1})|$, and 
$$
|A\cap B|= |\{f\in \Hom(P_{n},P_{k})\,:\, f([n])\subseteq [k-1]\setminus [1]|
=|\Hom(P_{n},P_{k-2})|,
$$  
we derive from  \eqref{le:hi}   
 that
\begin{align*}
|\Epi(P_n, P_k)|=|\Hom(P_{n},P_{k})|-2|\Hom(P_{n},P_{k-1})|+|\Hom(P_{n},P_{k-2})|.
\end{align*}
The result follows then by  applying Lemma~\ref{lem9}.
\end{proof}

It follows from Lemma~\ref{lem7} and Theorem~2  that
\begin{align} \label{eq:le}
l_{k}(n)=\sum\limits_{i=0}^{n-2}2^{n-i-2}
\sum_{j\in \mathbb{Z}}(-A_{i,\,j}+2B_{i,\,j}-C_{i,\,j}),
\end{align} 
where 
\begin{align*}
 A_{i,\,j}=A^+_{i,\,j}-A^-_{i,\,j},\quad 
B_{i,\,j}=B^+_{i,\,j}-B^-_{i,\,j},\quad
C_{i,\,j}=C^+_{i,\,j}-C^-_{i,\,j},
\end{align*}
with 
\begin{align*}
A^+_{i,\,j}&={i \choose \lceil\frac{i}{2}\rceil-j(n-k+2)},\quad 
A^-_{i,\,j}={i \choose
\lfloor\frac{i+n-k}{2}\rfloor+1-j(n-k+2)},\\
B^+_{i,\,j}&={i \choose \lceil\frac{i}{2}\rceil-j(n-k+1)},\quad 
B^-_{i,\,j}={i \choose
\lfloor\frac{i+n-k-1}{2}\rfloor+1-j(n-k+1)},\\
C^+_{i,\,j}&={i \choose \lceil\frac{i}{2}\rceil-j(n-k)},\quad
C^-_{i,\,j}={i \choose
\lfloor\frac{i+n-k-2}{2}\rfloor+1-j(n-k)}.
\end{align*}
%%%%%%%%%%%%%%%%%%%
%%%%%%%%%%%%%%%%%%%
\begin{lem} \label{lem8} For $n\geq 2k$, 
\begin{align}\label{eq:le8}
\sum_{j\in \mathbb{Z}}(-A_{i,\,j}+2B_{i,\,j}-C_{i,\,j})=&
\left\{\binom{i+1}{\left\lfloor\frac{i+n-k}2\right\rfloor+1}+\binom{i+1}{\left\lfloor \frac{i+n-k}2\right\rfloor-n+k}\right\}\nonumber\\
&-2
\left\{\binom{i}{\left\lfloor\frac{i+n-k-1}2\right\rfloor+1}+\binom{i}{\left\lfloor \frac{i+n-k-1}2\right\rfloor-n+k}\right\}.
\end{align}
\end{lem}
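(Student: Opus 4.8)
The plan is to exploit the hypothesis $n\ge 2k$ to collapse each of the six sums $\sum_{j}A^{\pm}_{i,j}$, $\sum_{j}B^{\pm}_{i,j}$, $\sum_{j}C^{\pm}_{i,j}$ into at most two binomial terms, and then to recombine them via Pascal's rule. Throughout I write $d=n-k$, so that $d\ge k$ and, because $0\le i\le n-2=d+k-2\le 2d-2$, one has $\lceil i/2\rceil\le d-1$. First I would dispose of the ``plus'' parts. Since $\binom{i}{r}=0$ unless $0\le r\le i$, the bound $\lceil i/2\rceil\le d-1$ forces $A^{+}_{i,j}=B^{+}_{i,j}=C^{+}_{i,j}=0$ for every $j\ne0$: a choice $j\ge1$ would require the lower index $\lceil i/2\rceil-j(\cdot)$ to be nonnegative, hence $\lceil i/2\rceil\ge d$, while $j\le-1$ would require $\lfloor i/2\rfloor\ge d$, both impossible. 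Hence all three plus-sums equal the single term $\binom{i}{\lceil i/2\rceil}$, and the coefficients $(-1,2,-1)$ annihilate them, so that, writing $-A_{i,j}+2B_{i,j}-C_{i,j}=(-A^{+}_{i,j}+2B^{+}_{i,j}-C^{+}_{i,j})+(A^{-}_{i,j}-2B^{-}_{i,j}+C^{-}_{i,j})$, the left-hand side of \eqref{eq:le8} reduces to $\sum_{j}(A^{-}_{i,j}-2B^{-}_{i,j}+C^{-}_{i,j})$.

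Next I would evaluate the three ``minus'' sums. The same size estimates (now using $i\le 2d-2$) show that only $j=0$ and $j=1$ survive: for $j\ge2$ the lower index becomes negative, and for $j\le-1$ it exceeds $i$. Reducing the $j=1$ indices modulo the respective periods $d+2,\,d+1,\,d$ and then applying the symmetry $\binom{i}{r}=\binom{i}{i-r}$ (via the parity-free identity $a-\lfloor x\rfloor=\lceil a-x\rceil$) gives
\begin{align*}
\sum_j A^{-}_{i,j}&=\binom{i}{\lfloor\frac{i+d}{2}\rfloor+1}+\binom{i}{\lceil\frac{i+d}{2}\rceil+1},\\
\sum_j B^{-}_{i,j}&=\binom{i}{\lfloor\frac{i+d-1}{2}\rfloor+1}+\binom{i}{\lceil\frac{i+d+1}{2}\rceil},\\
\sum_j C^{-}_{i,j}&=\binom{i}{\lfloor\frac{i+d}{2}\rfloor}+\binom{i}{\lceil\frac{i+d}{2}\rceil}.
\end{align*}

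Finally I would assemble these and match the target. Forming $\sum_j(A^{-}_{i,j}-2B^{-}_{i,j}+C^{-}_{i,j})$ and pairing the two $C^{-}$ terms with the two $A^{-}$ terms, Pascal's rule $\binom{i}{r-1}+\binom{i}{r}=\binom{i+1}{r}$ turns $\binom{i}{\lfloor\frac{i+d}{2}\rfloor}+\binom{i}{\lfloor\frac{i+d}{2}\rfloor+1}$ into $\binom{i+1}{\lfloor\frac{i+d}{2}\rfloor+1}$, and likewise $\binom{i}{\lceil\frac{i+d}{2}\rceil}+\binom{i}{\lceil\frac{i+d}{2}\rceil+1}$ into $\binom{i+1}{\lceil\frac{i+d}{2}\rceil+1}$. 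On the right of \eqref{eq:le8}, the two binomials whose lower index is shifted by $-n+k=-d$ are rewritten by the same symmetry, namely $\binom{i+1}{\lfloor\frac{i+d}{2}\rfloor-d}=\binom{i+1}{\lceil\frac{i+d}{2}\rceil+1}$ and $\binom{i}{\lfloor\frac{i+d-1}{2}\rfloor-d}=\binom{i}{\lceil\frac{i+d+1}{2}\rceil}$. Both sides then become the identical four-term expression $\binom{i+1}{\lfloor\frac{i+d}{2}\rfloor+1}+\binom{i+1}{\lceil\frac{i+d}{2}\rceil+1}-2\binom{i}{\lfloor\frac{i+d-1}{2}\rfloor+1}-2\binom{i}{\lceil\frac{i+d+1}{2}\rceil}$, which is \eqref{eq:le8}.

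The hard part will be bookkeeping rather than anything conceptual: the floor/ceiling arithmetic in the reduction of the $j=1$ terms and in every symmetry rewrite must be carried out so as to be valid for both parities of $i+d$ at once (each step above uses only $a-\lfloor x\rfloor=\lceil a-x\rceil$, which is parity-free), and one must verify that the threshold $n\ge2k$ is exactly what is needed to guarantee $\lceil i/2\rceil\le d-1$, hence to kill all $j\ne0$ terms in the plus-sums and all $\lvert j\rvert\ge2$ terms in the minus-sums.
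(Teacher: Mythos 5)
Your proof is correct and takes essentially the same route as the paper's: use $n\ge 2k$ to show that only the $j=0$ plus parts survive (cancelling under the coefficients $-1,2,-1$) and only the $j=0,1$ minus parts survive, then combine the $A^-$ and $C^-$ terms by Pascal's rule. Your extra normalization via $\binom{i}{r}=\binom{i}{i-r}$ is harmless but unnecessary bookkeeping; the paper instead leaves the surviving $j=1$ terms in their original form, which already coincides literally with the right-hand side of \eqref{eq:le8}.
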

%%%%%%%%%%%%%%%%
\begin{proof}
Since $0\leq k\leq\frac{n}2$,
we have  $\frac{n}2\leq n-k\leq n-1$. Therefore,
\begin{enumerate}
\item if $j<0$,
then $\lceil \frac{i}2\rceil-j(n-k)\geq \lceil \frac{i}2\rceil+n-k\geq\lceil \frac{i}2\rceil+\frac{n}2\geq \lceil \frac{i}2\rceil+\frac{i}2+1\geq i+1$ because $i\leq n-2$. Similarly we have  $\lfloor\frac{i+n-k-2}{2}\rfloor+1-j(n-k)\geq i+1$. 
Hence, all the summands  $A_{i,\,j}$, $B_{i,\,j}$ and $C_{i,\,j}$  vanish;
\item if $j>0$,
then $\lceil \frac{i}2\rceil-j(n-k)\leq \lceil \frac{i}2\rceil-(n-k)\leq\lceil \frac{i}2\rceil-\lceil\frac{n}2\rceil\leq -1$ because $i\leq n-2$.
Hence, all $A^+_{i,\,j}$, $B^+_{i,\,j}$ and $C^+_{i,\,j}$  vanish;
\item if $j\geq 2$, then $\lfloor\frac{i+n-k}{2}\rfloor+1-j(n-k+2)\leq \lfloor\frac{n-2+n-k}{2}\rfloor+1-2(n-k+2)\leq\frac{3}2k-n-5\leq -1$. Similarly we have $\lfloor\frac{i+n-k-1}{2}\rfloor+1-j(n-k+1)\leq -1$ and $\lfloor\frac{i+n-k-2}{2}\rfloor+1-j(n-k)\leq -1$, so all $A^-_{i,\,j}$, $B^-_{i,\,j}$ and $C^-_{i,\,j}$  vanish.
\end{enumerate}
It follows that  the summation over $j\in \Z$ in \eqref{eq:le8} reduces to
$$
-A^-_{i,\,0}+2B^-_{i,\,0}-C^-_{i,\,0}-A^-_{i,\,1}+2B^-_{i,\,1}-C^-_{i,\,1}.
$$
Using ${n \choose k}+{n \choose k-1}={n+1 \choose k}$ to combine 
$A^-_{i,\,0}$ with $C^-_{i,\,0}$ and $A^-_{i,\,1}$ with $C^-_{i,\,1}$, respectively, 
we derive the desired formula.
\end{proof}
%%%%%%%%%%%%%

Now, we are in position to prove Theorem~1.
When $n\geq 2k$, by Lemma~\ref{lem8},   the summands in \eqref{eq:le} can be  written as
$$
2^{n-i-2}\sum_{j\in \mathbb{Z}}(-A_{i,\,j}+2B_{i,\,j}-C_{i,\,j})=D_{i+1} -D_{i},
$$
where 
$$
D_{i}=2^{n-i-1}
\left\{\binom{i}{\left\lfloor\frac{i+n-k-1}2\right\rfloor+1}+\binom{i}{\left\lfloor \frac{i+n-k-1}2\right\rfloor-n+k}\right\}.
$$
Substituting this into \eqref{eq:le} we obtain
$$
l_{k}(n)=\sum_{i=0}^{n-2}(D_{i+1} -D_{i})=D_{n-1},
$$
which is clearly equivalent to \eqref{eq:main}.

\subsection*{Acknowledgement}
We thank the referees for helpful suggestions on the initial version of this paper.
This work was  partially supported by 
 the grant ANR-08-BLAN-0243-03.

\end{document}